\newtheorem{theorem}{Theorem}[section]
\newtheorem{lemma}[theorem]{Lemma}
\newtheorem{proposition}[theorem]{Proposition}
\newenvironment{proof}[1][Proof]{\noindent\textbf{#1.} }
{\hfill \ \rule{0.5em}{0.5em}}
\begin{document}

\title{Upper bounds for $B_h[g]$-sets with small $h$}
\author{Craig Timmons\thanks{Department of Mathematics and Statistics, California State University Sacramento.  
This work was supported by a grant from the Simons Foundation (\#359419).} }

\date{}

\maketitle

\begin{abstract}
For $g \geq 2$ and $h \geq 3$, we give small improvements on the maximum size of a $B_h[g]$-set 
contained in the interval $\{1,2, \dots , N \}$.  In particular, we show that a $B_3[g]$-set 
in $\{1,2, \dots , N \}$ has at most $(14.3 g N)^{1/3}$ elements.  The previously best known 
bound was $(16 gN)^{1/3}$ proved by Cilleruelo, Ruzsa, and Trujillo.  We also introduce a related optimization problem 
that may be of independent interest.    
\end{abstract}


\section{Introduction}

Let $A \subseteq [N]:= \{1,2, \dots , N \}$ and let $h$ and $g$ be positive integers.  We say that $A$ is a \emph{$B_h[g]$-set} if 
for any integer $n$, there are at most $g$ distinct multi-sets $\{a_1 , a_2 , \dots , a_h \}  \subseteq A$ such that 
\[
a_1 + a_2 + \dots + a_h = n.
\]
Determining the maximum size of a $B_h[g]$-set in $A \subseteq [N]$ is a well-studied problem in number theory.  
Initial bounds on $B_h[g]$-sets were obtained combinatorially.  
Indeed, if $A$ is a $B_h[g]$-set, then consider the $\binom{ |A| + h - 1 }{h}$ multi-sets of size $h$ in $A$.  
The sum of the elements in each of the multi-sets represents each integer in $\{1,2, \dots , h N \}$ at most $g$ times.  Therefore,  
\begin{equation}\label{trivial bound}
\binom{ |A| + h  - 1}{ h } \leq  g h N 
\end{equation}
which implies $|A| \leq ( h!  g h N )^{1/h}$.
The breakthrough papers of 
Cilleruelo, Ruzsa, Trujillo \cite{crt}, Cilleruelo, Jim\'{e}nez-Urroz \cite{cu}, and  Green \cite{green} introduced methods from analysis and probability to obtain significant improvements on (\ref{trivial bound}).  Several of the results in these papers 
have yet to be improved upon.  For more on $B_h[g]$-sets, we recommend the survey papers of 
O'Bryant \cite{o} and Plagne \cite{p}.
We will be concerned with $B_h[g]$-sets where $g \geq 2$ and $h \geq 3$.    
For $3 \leq h \leq 6$ and $g \geq 2$, the best known upper bound on the size of a $B_h[g]$-set $A \subseteq [N]$ is  
\begin{equation}\label{bound 1}
|A| \leq \left( \frac{ h! h g N }{ 1 + \cos^h ( \pi / h ) } \right)^{1/h}
\end{equation}
due to Cilleruelo, Ruzsa, and Trujillo \cite{crt}.  For $h \geq 7$, the best known bound is 
\begin{equation}\label{bound 2}
|A| \leq \left( \sqrt{ 3h } h! g N \right)^{1/h}
\end{equation}
which was proved by Cilleruelo and Jim\'{e}nez-Urroz \cite{cu} using an idea of Alon.  For $g = 1$, the best bounds 
can be found in \cite{green} and \cite{cill}.  In the case that $h = 2$ and $g \geq 2$, Yu \cite{yu} was able to make some improvements to 
the results of Green \cite{green}.         
In this note we improve (\ref{bound 1}) and make a small improvement upon (\ref{bound 2}).      

\begin{theorem}\label{b3 bound}
(i) Let $g \geq 2$ and $h \geq 4$ be integers.  
If $A \subseteq [N]$ is a $B_h[g]$-set, then 
\[
|A| \leq (1 + o_N (1)) \left( \frac{ x_h h! h g N }{ \pi } \right)^{1/h}
\]
where $x_h$ is the unique real number in $(0 , \pi )$ that satisfies $\frac{ \sin x_h }{x_h } = 
\left( \frac{4}{ 3 - \cos ( \pi / h ) } - 1 \right)^h$.  

\noindent
(ii) If $A \subseteq [N]$ is a $B_3[g]$-set, then for large enough $N$, 
\[
|A| < ( 14.3 g N)^{1/3}.
\]
\end{theorem}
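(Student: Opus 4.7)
The overall strategy is the exponential-sum method of Lindstr\"{o}m and Cilleruelo--Ruzsa--Trujillo. Set $F(\alpha)=\sum_{a\in A}e^{2\pi i a\alpha}$ and let $r_h(n)$ denote the number of ordered $h$-tuples $(a_1,\dots,a_h)\in A^h$ with $a_1+\cdots+a_h=n$. The $B_h[g]$ hypothesis forces $r_h(n)\le g\cdot h!$, and combining this with Parseval's identity yields the upper bound
\[
\int_0^1 |F(\alpha)|^{2h}\,d\alpha=\sum_n r_h(n)^2\le g\cdot h!\cdot |A|^h.
\]
The task is then to produce a matching lower bound on $\int_0^1|F|^{2h}$, which rearranges to give the stated inequality on $|A|^h$.

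For part (i), the plan is to localize to a short interval $I=[-\delta,\delta]$ with $\delta=x_h/(2\pi N)$. On $I$ the phases $2\pi a\alpha$ for $a\in A\subseteq[N]$ stay small in absolute value, and two competing pointwise lower bounds for $|F(\alpha)|$ are available: a near-origin bound of the shape $|F(\alpha)|\ge |A|\cos(2\pi N\alpha)$, which is sharp very close to $0$, and a tail bound of the shape
\[
|F(\alpha)|\ge |A|\left(\frac{4}{3-\cos(\pi/h)}-1\right)
\]
obtained by refining the CRT averaging argument over shifted arcs and exploiting the $B_h[g]$-structure beyond the raw inclusion $A\subseteq[N]$. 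These two bounds cross precisely at the value $x=2\pi N\alpha$ satisfying $\sin x/x=(4/(3-\cos(\pi/h))-1)^h$, which by definition is $x_h$. Integrating $|F|^{2h}$ over $I$ using the larger of the two bounds, approximating the resulting discrete sums by integrals (this produces the $(1+o_N(1))$ factor), and combining with the upper bound above gives the inequality in part (i).

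For part (ii), direct substitution of $h=3$ into part (i) would yield only a constant of roughly $18 x_3/\pi\approx 14.6$. To reduce this strictly below $14.3$, I would perform a refined version of the lower-bound step tailored to $h=3$: replace the sharp-cutoff indicator $\chi_I$ by a smoother tent- or Fej\'{e}r-type kernel and re-optimize, or separately extract the gain from the $g\ge 2$ hypothesis (which is least lossy when $h$ is small). The final numerical constant $14.3$ comes out of an explicit computation at $h=3$ after this refinement, and the requirement that $N$ be sufficiently large absorbs the lower-order losses.

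The main obstacle is the tail pointwise lower bound for $|F|$ on $I$: the near-origin cosine estimate is routine, but the constant $4/(3-\cos(\pi/h))-1$ is finely calibrated to the CRT constant $\cos(\pi/h)$, and proving it requires the full strength of the CRT-type averaging argument combined with the $B_h[g]$ condition. Verifying that the resulting optimization actually has $x_h$ as its critical point, and that the two bounds match precisely there, is the technical heart of the argument.
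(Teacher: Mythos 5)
There is a genuine gap; in fact the central mechanism you propose is not the one that produces these constants, and two of your key intermediate claims are not correct as stated. First, the paper never forms the moment $\int_0^1|F|^{2h}\,d\alpha=\sum_n r_h(n)^2$ and never lower-bounds such an integral over a short arc. (That route, with the near-origin bound $|F(\alpha)|\ge|A|\cos(\pi N\alpha)$, gives a strictly \emph{worse} constant --- about $(19.2\,gN)^{1/3}$ at $h=3$.) The actual source of the factor $\sin(\pi Q_h)/(\pi Q_h)$ is a bound on individual Fourier coefficients: writing $f(t)=\sum_{a\in A}e^{iat}$ and $t_h=2\pi/(hN)$, one has $|f(t_hj)|^h=|\hat{g}(j)|$ where $g(n)=h!g-r_h(n)$ is nonnegative, bounded by $h!g$, and has total mass $h!ghN-|A|^h$; Green's Lemma 26 then bounds $|\hat{g}(j)|$ by the transform of an interval, giving $|f(t_hj)|\le(1+o_N(1))|A|\bigl(\sin(\pi Q_h)/(\pi Q_h)\bigr)^{1/h}$ with $Q_h=|A|^h/(h!hgN)$. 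Thus $x_h$ is the largest admissible value of $\pi Q_h$, not the crossing point of two pointwise lower bounds for $|F|$. Second, your ``tail bound'' $|F(\alpha)|\ge|A|\bigl(4/(3-\cos(\pi/h))-1\bigr)$ cannot hold pointwise: if $A$ sits near the endpoints of $[N]$, then at the edge of the arc all phases are near $\pm\pi/h$ and $|F|\approx|A|\cos(\pi/h)$, which is smaller than your claimed constant. In the paper that constant arises as the minimum over $[-\pi/h,\pi/h]$ of the specific polynomial $G(x)$ in (\ref{def of G}), whose coefficients have $\ell^1$-norm $1/\cos(\pi/h)$; the inequality $\sum_{a}G((a-\tfrac{N+1}{2})t_h)\le\sum_j|c_j|\,|f(t_hj)|$ is what couples this to the Fourier coefficient bound. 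It is a statement about a weighted sum over $A$, not about $|F|$ itself.

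For part (ii), neither smoothing the cutoff nor ``extracting gain from $g\ge2$'' is the mechanism (the bound must scale as $(CgN)^{1/3}$ uniformly in $g$, so there is no extra gain to extract from $g\ge2$). The paper runs a dichotomy on the distribution of $A$: either some edge slice $C_k=A\cap\bigl(((k-1)\delta N,k\delta N]\cup[(1-k\delta)N,(1-(k-1)\delta)N)\bigr)$ contains more than a $(72\delta/14.295)^{1/3}$ fraction of $A$, in which case the elementary count $\binom{|C_k|+2}{3}\le g|C_k+C_k+C_k|\le12g\delta N$ already yields $|A|\le(14.295\,gN)^{1/3}$; or no slice does, in which case the concentration constraints $\alpha_k(\delta)\le(72\delta/14.295)^{1/3}$ for $\delta=j/128$ force the sum $\sum_a H((a-\tfrac{N+1}{2})t_3)$ with $H(x)=1.6\cos x-0.3\cos3x+0.1\cos6x$ to exceed $1.2455|A|$, which pushed through the part (i) machinery gives $(14.296\,gN)^{1/3}$. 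Without this dichotomy the best one gets from part (i) at $h=3$ is about $(14.65\,gN)^{1/3}$, so the constant $14.3$ is out of reach of your outline as written.
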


Our improvements for small $h$ are contained in the following table.  

\begin{center}
\begin{tabular}{| c | c | c | } \hline
$h$ & upper bound of \cite{crt}, \cite{cu} & new upper bound \\ \hline
3 & $(16 g N)^{1/3} $ & $(14.3 g N)^{1/3}$  \\
4 &  $(76.8 g N)^{1/4}$ & $( 71.49 g N)^{1/4}$ \\ 
5 & $( 445 .577 g N)^{1/5}$ & $( 413.07 g N)^{1/5}$ \\ 
6 & $( 3054. 7 g N)^{1/6}$ & $(2774.16 g N)^{1/6}$ \\ 
7 & $( 23096.19 g N)^{1/7}$ & $(21294.74 g N)^{1/7}$ \\  \hline

\end{tabular}

\medskip

Table 1: Upper bounds on $B_h[g]$-sets in $\{1,2, \dots , N \}$ for sufficiently large $N$. 
\end{center}

By looking at Table 1, it is clear that Theorem \ref{b3 bound} improves (\ref{bound 1}) for $3 \leq h \leq 6$.  The 
inequality 
\[
\frac{ \sin ( \pi \sqrt{3/h} ) }{ \pi \sqrt{3 / h } } < \left( \frac{4}{3 - \cos ( \pi / h ) } - 1 \right)^h
\]
holds for all $h \geq 3$; a fact that can be verified using Taylor series.  Since $\frac{ \sin x}{x}$ is decreasing on $[0, \pi ]$, we must have $x_h < \pi \sqrt{ 3/ h }$ for all $h \geq 3$ which shows that Theorem \ref{b3 bound} improves (\ref{bound 2}).  
The improvement, however, is $(1 - o_h (1))$ since $\frac{x_h \sqrt{h} }{ \pi \sqrt{3} } \rightarrow 1$ as $h \rightarrow \infty$.    

In the next section we prove Theorem \ref{b3 bound}.  Our arguments rely heavily on \cite{crt} and \cite{green}.   
In Section 3 we introduce an optimization problem that is motivated by our work in Section 2.  


\section{Proof of Theorem 1.1}

First we show how to improve (\ref{bound 1}) using the arguments of \cite{crt} and \cite{green}.
Let $A \subseteq [N]$ be a $B_h[g]$-set where $h \geq 2$.  Define 
$f(t) = \sum_{ a \in A} e^{ i a t }$, $t_h = \frac{2 \pi }{ h N}$, and 
\[
r_h  ( n) = | \{ (a_1 , \dots , a_h ) \in A^h : a_1 + \dots + a_h = n \} |.
\]
The first lemma is a variation of inequality (40) from \cite{green}. 

\begin{lemma}[Green \cite{green}]\label{green lemma 1}
For any $j \in \{1,2, \dots , hN - 1 \}$,
\[
| f ( t_h  j ) | \leq (1 + o_N (1) ) |A| \left( \frac{ \sin ( \pi Q_h ) }{ \pi Q_h } \right)^{1/h}
\]
where $Q_h = \frac{ |A|^h}{h! h g N}$.
\end{lemma}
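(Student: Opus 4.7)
My plan is to combine the Fourier representation of $f(t)^h$ with the $B_h[g]$-property. Writing
\[
f(t)^h = \sum_{n=h}^{hN} r_h(n)\, e^{int},
\]
the $B_h[g]$-hypothesis gives $0 \le r_h(n) \le h!g$ pointwise and $\sum_n r_h(n) = |A|^h$. In particular, the $\ell^2$-bound $\sum_n r_h(n)^2 \le h!g \cdot |A|^h$ holds, and by Parseval
\[
\int_0^{2\pi} |f(t)|^{2h}\, dt \le 2\pi\, h!g\, |A|^h.
\]

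To convert this averaged bound into a pointwise one at $t_h j$, I would integrate $|f(t)|^{2h}$ against a non-negative test kernel $K(t - t_h j)$ peaked at $t_h j$. Expanding $|f(t)|^{2h} = \sum_k C(k)\, e^{ikt}$ with $C(k) := \sum_n r_h(n) r_h(n-k) \ge 0$ and $C(0) \le h!g\,|A|^h$, the integral
\[
J = \int_0^{2\pi} |f(t)|^{2h}\, K(t - t_h j)\, dt = 2\pi \sum_k C(k)\, \widehat{K}(k)\, e^{ikt_h j}
\]
is controlled above by the Sidon-type bound on $C$ together with the structure of $\widehat{K}$, while on the other hand the peak of $K$ at $0$ ensures $J \gtrsim |f(t_h j)|^{2h}$. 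The sinc factor $\sin(\pi Q_h)/(\pi Q_h)$ should emerge as the Fourier transform of the characteristic function of an interval of length $\sim hN Q_h = |A|^h/(h!g)$, which is the natural scale determined by the ratio between the $\ell^1$-mass and the $\ell^\infty$-bound on $r_h$ (note that this scale is precisely the support of the extremal configuration $r = h!g\,\mathbb{1}_S$).

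The main obstacle is choosing $K$ precisely so as to produce the sinc factor with the correct constant $\pi Q_h$, and so that the resulting inequality holds uniformly in $j$. I would expect $\widehat{K}$ to be a triangular majorant (i.e., $K$ itself the modulus-squared of an appropriately-scaled Dirichlet kernel), with the length parameter tuned so that the bound simplifies to $(1 + o_N(1))|A|^h \sin(\pi Q_h)/(\pi Q_h)$. Taking $h$-th roots then yields the claim. The error term $(1 + o_N(1))$ will account for discretization of the kernel length (which must be an integer in the Dirichlet setting) and small-angle approximations of sines; uniformity over $j \in \{1, \ldots, hN-1\}$ should follow from translation-invariance of the kernel-based argument.
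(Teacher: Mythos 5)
There is a genuine gap here, and it sits exactly where you flag ``the main obstacle'': the passage from the averaged $L^{2h}$ bound back to a pointwise bound with the precise factor $\sin(\pi Q_h)/(\pi Q_h)$ is never carried out, and the machinery you set up is not the right tool to produce it. Two problems. First, replacing the hypotheses $0 \leq r_h(n) \leq h!\,g$ and $\sum_n r_h(n) = |A|^h$ by the weaker consequence $\sum_n r_h(n)^2 \leq h!\,g\,|A|^h$ already throws away the information that determines the sharp constant; Parseval plus a kernel localization of $|f|^{2h}$ will recover the correct order of magnitude $(gN)^{1/h}$ but with a constant that depends on the kernel and cannot degenerate to the exact sinc factor, which is the entire point of the lemma (the paper's improvements live in the constant). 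Second, the pointwise-versus-average dichotomy you are trying to bridge is a red herring: since $t_h = 2\pi/(hN)$, the points $t_h j$ are exactly the frequencies of $\mathbb{Z}_{hN}$, so $f(t_h j)^h = \sum_{n} r_h(n) e^{2\pi i nj/(hN)}$ is already a single discrete Fourier coefficient of the bounded nonnegative function $r_h$, and no localization argument is needed at all.

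The paper's proof exploits this directly. For $j \in \{1,\dots,hN-1\}$ one has $\sum_{n=1}^{hN} e^{2\pi i nj/(hN)} = 0$, so $f(t_h j)^h = -\hat{g}(j)$ where $g(n) = h!\,g - r_h(n)$ on $\mathbb{Z}_{hN}$; the $B_h[g]$ property gives $0 \leq g(n) \leq h!\,g$ and $\sum_n g(n) = h!\,g\,hN - |A|^h$. Green's Lemma 26 is then invoked: it is precisely the extremal statement that, among functions on $\mathbb{Z}_{m}$ taking values in $[0,M]$ with prescribed total mass $S$, a fixed nonzero Fourier coefficient is maximized (up to the integrality correction) by $M$ times the indicator of an arc of length $S/M$, whose Fourier coefficient is a ratio of sines. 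Evaluating that ratio and using $\sin(\pi/(hN)) \sim \pi/(hN)$ gives $|\hat{g}(j)| \leq (1+o_N(1))\,|A|^h \sin(\pi Q_h)/(\pi Q_h)$, and taking $h$-th roots finishes. Your parenthetical remark about the extremal configuration $r = h!\,g\,\mathbb{1}_S$ shows you have located the right extremal problem; the fix is to apply that extremal bound directly to the Fourier coefficient $\hat{r_h}(j)$ (equivalently, to its complement $\hat{g}(j)$, as the paper does), rather than routing through Parseval and a kernel.
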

\begin{proof}
Let $j \in \{1,2, \dots , hN - 1 \}$.  Define $g: \mathbb{Z}_{hN} \rightarrow \{0,1, \dots \}$ by $g(n) = h! g - r_h (n)$.
Following \cite{crt}, we observe that 
\begin{equation}\label{lemma 1 eq} 
f( t_h j)^h = \sum_{n = 1}^{hN} r_h (n) e^{ \frac{2 \pi i n j } {hN}} = - \sum_{n = 1}^{hN} ( h! g - r_h (n) ) 
e^{ \frac{2 \pi i n j }{h N } }.
\end{equation}
Let $\hat{g}$ be the Fourier transform of $g$ so 
$\hat{g} (j) = \sum_{n = 1}^{hN} g(n) e^{  \frac{2 \pi i n j }{hN} }$
for $j \in \mathbb{Z}_{hN}$.  From (\ref{lemma 1 eq}) and the definition of $g$, 
\begin{equation}\label{lemma 1 eq 3} 
| f (t_h j ) |^h = | \hat{g} (j) |.
\end{equation}
Since $A$ is a $B_h[g]$-set, the inequality $0 \leq g(n) \leq h! g$ holds for all $n$.  Furthermore, 
$\sum_{n =1}^{hN} g(n ) = h! g h N  - |A|^h$.  Lemma 26 of \cite{green} gives 
\begin{equation}\label{lemma 1 eq 2}
| \hat{g} (j) | \leq h ! g \left| \frac{ \sin ( \frac{ \pi }{hN} ( \frac{ h! hg N - |A|^h }{h! g } + 1 ) ) }{ \sin ( \frac{ \pi }{ hN} ) } \right|
= h! g \left| \frac{ \sin ( \pi Q_h - \frac{ \pi }{hN} ) }{ \sin ( \frac{ \pi }{hN } ) } \right|.
\end{equation}
By (\ref{bound 1}), the value $Q_h$ satisfies $0 \leq Q_h \leq 1$ for all $N$.  Therefore, 
\[
| \hat{g} (j) | \leq h! g (1 + o_N (1)) \frac{ \sin ( \pi Q_h )}{ \pi / hN } = (1 + o_N(1)) |A|^h \frac{ \sin ( \pi Q_h ) }{ \pi Q_h }.
\]
Combining this inequality with (\ref{lemma 1 eq 3}), we get
\[
| f (t_h j) | \leq (1 + o_N (1)) |A| \left( \frac{ \sin ( \pi Q_h ) }{ \pi Q_h } \right)^{1/h}
\]   
which completes the proof of the lemma.  
\end{proof}

\bigskip

Again following \cite{crt}, we need to choose a function $F(x) = \sum_{j = 1}^{ h N  } b_j \cos ( j x)$ such that 
\[
\sum_{a \in A} F \left( \left( a - \frac{N+1}{2} \right) t_h \right)
\]
is large and $\sum_{j = 1}^{ hN } |b_j|$ is small.  For $h \geq 3$, the function 
$F(x)= \frac{1}{ \cos ( \pi / h ) } \cos x$ gives
\[
\sum_{a \in A} F \left( \left( a - \frac{N+1}{2} \right) t_h \right) \geq |A|
\]
and $\sum_{j = 1}^{h N } |b_j|  = \frac{1}{ \cos (\pi / h ) }$.  This is the function that is used in \cite{crt}.  
We will choose a different function $G$ that does better than $F$ and still has a simple form.  
Let 
\begin{equation}\label{def of G}
G(x) = \left(  \frac{ 2}{ 3 - \cos ( \pi / h ) } \right) \frac{1}{ \cos ( \pi / h ) } \cos (x) 
- \left( 1 - \frac{2}{3 - \cos ( \pi / h ) } \right) \frac{1}{ \cos ( \pi / h ) } \cos ( h x).
\end{equation}
The minimum value of $G(x)$ on the interval $[ - \frac{\pi }{ h}  , \frac{ \pi }{ h} ]$ is $\frac{1}{ \cos ( \pi / h ) } \left( \frac{ 4}{ 3 - \cos ( \pi / h )} - 1 \right)$ and so  
\begin{equation}\label{sdf}
\sum_{a \in A} G  \left( \left( a - \frac{N+1}{2} \right) t_h \right) \geq \frac{1}{ \cos ( \pi / h ) } \left( \frac{ 4}{ 3 - \cos ( \pi / h )} - 1 \right) |A|.
\end{equation}
Here we are using the fact that $| ( a - (N+1)/2 ) t_h | < \frac{ \pi }{h}$ for any $a \in A$.  
If the constants $c_j$ are defined by $G(x) = \sum_{j =1 }^{hN} c_j \cos (jx)$, 
then $\sum_{j = 1}^{h N } |c_j|  = \frac{1}{ \cos ( \pi / h ) }$.  
Using (\ref{sdf}), we have  
\begin{eqnarray*}
\frac{1}{ \cos ( \pi / h ) } \left( \frac{ 4}{ 3 - \cos ( \pi / h )} - 1 \right) |A| & \leq & \sum_{a \in A} G  \left( \left( a - \frac{N+1}{2} \right) t_h \right) \\
& = & \textup{Re} \left( \sum_{j=1}^{ hN } c_j \sum_{a \in A} e^{ ( a - (N+1)/2) \frac{2 \pi i j } {hN }} \right) \\
& \leq & \sum_{j = 1}^{h N } |c_j| | f ( t_h j )| \\
& \leq & \frac{1}{ \cos ( \pi / h ) } (1 + o_N (1) ) |A| \left( \frac{ \sin ( \pi Q_h ) }{ \pi Q_h } \right)^{1/h}
\end{eqnarray*}
where in the last line we have used Lemma \ref{green lemma 1} and $\sum_{j =1}^{ hN } |c_j| = \frac{1}{ \cos ( \pi / h ) }$.  
Some rearranging gives 
\begin{equation}\label{new bound 1}
\left( \frac{4}{ 3 - \cos ( \pi / h ) } - 1 \right)^h \leq (1 + o_N (1)) \frac{ \sin ( \pi Q_h ) }{ \pi Q_h }.
\end{equation}
We remark that $\frac{4}{ 3 - \cos ( \pi /h ) } - 1 > \cos ( \pi / h )$ is equivalent to $(1 - \cos ( \pi / h ) )^2 > 0$.  
The point of this is that using $G$ defined by (\ref{def of G}) instead of $F(x) = \frac{1}{ \cos ( \pi / h ) } \cos x$ (which would give the value 1 on the left hand side of (\ref{new bound 1})) does lead to a better upper bound.   
  
Recalling that $0 \leq Q_h \leq 1$, lower bounds on $\frac{ \sin ( \pi Q_h )}{ \pi Q_h}$ translate to upper bounds 
on $\pi Q_h$.  Let $x_h$ be the unique real number in the interval $( 0 , \pi )$ that satisfies 
\[
\left( \frac{4}{ 3 - \cos ( \pi / h ) } - 1 \right)^h = \frac{ \sin (x_h) }{x_h }.
\]
Then by (\ref{new bound 1}), $\pi Q_h \leq (1 + o_N(1)) x_h$ since the function $\frac{ \sin x}{x}$ is decreasing on $[0, \pi ]$.  
We can rewrite $\pi Q_h \leq (1 + o_N(1)) x_h$ as 
\begin{equation}\label{new bound 2}
|A| \leq (1 + o_N (1)) \left( \frac{x_h h! h g N }{ \pi } \right)^{1/h}.
\end{equation}
The upper bounds obtained from (\ref{new bound 2}) for $h \in \{4,5,6,7 \}$ are given in Table 1.  We have chosen to round the values so that all of the bounds in Table 1 hold for sufficiently large $N$.   
In particular, (\ref{new bound 2}) implies that a $B_3[g]$-set $A \subseteq  [N]$ has at most 
$(14.65 g N)^{1/3}$ elements.  We can improve this bound by considering the distribution of $A$ in the 
interval $[ N]$.    

Assume now that $A$ is a $B_3 [g]$-set.  
Let $\delta$ be a real number with $0 < \delta < \frac{1}{4}$ and set $l = \lfloor \frac{1}{2 \delta } \rfloor$.  For 
$1 \leq k \leq l$, let 
\[
C_k = \left( A \cap ( ( k -1 ) \delta N , k \delta N ] \right) \cup \left( A \cap [ ( 1- k \delta ) N , ( 1 - ( k - 1) \delta ) N ) \right).
\]
The definition of $l$ ensures that the sets $C_1 , \dots , C_l$ together with $A \cap ( l \delta N  , (1 - l \delta ) N )$ form a partition of $A$.  
Using the same counting argument that is used to obtain (\ref{trivial bound}), we show that if 
some $C_k$ contains a large proportion of $A$, then $|A| \leq ( 14.295 g N)^{1/3}$.  To this end, 
define real numbers $\alpha_1 ( \delta ) , \dots , \alpha_l ( \delta )$ by 
\begin{equation}\label{alphas}
\alpha_k ( \delta ) |A| = |C_k|
\end{equation}
for $1 \leq k \leq l$.  The value $\alpha_k ( \delta )$ represents the proportion of $A$ that is contained in the union
$( ( k - 1) \delta N , k\delta N] \cup[ ( 1 - k \delta )N  , (1 - ( k - 1) \delta ) N )$.  

\begin{lemma}\label{ck lemma}
If $0 < \delta < \frac{1}{4}$, $l = \lfloor \frac{1}{2 \delta } \rfloor$, and $\alpha_1 ( \delta ) , \dots , \alpha_l ( \delta)$ are 
defined by (\ref{alphas}), then for any $N > \frac{2 }{ \delta }$ and $1 \leq k \leq l$, 
\[
|A| \leq \left( \frac{ 72 g \delta N}{ \alpha_k( \delta )^3 } \right)^{1/3}.
\]
\end{lemma}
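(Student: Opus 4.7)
The plan is to mimic the derivation of the trivial bound (\ref{trivial bound}) applied to the subset $C_k$, exploiting the fact that $C_k$ sits in two short windows of length $\delta N$ rather than in the full interval $[1,N]$. Write $C_k = L_k \cup R_k$ where $L_k := A \cap ((k-1)\delta N, k\delta N]$ and $R_k := A \cap [(1-k\delta)N, (1-(k-1)\delta)N)$; each of these lies in an integer interval of length $\delta N$, and $C_k \subseteq A$ inherits the $B_3[g]$ property.

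Next I would stratify the 3-element multisets of $C_k$ by the number $j \in \{0,1,2,3\}$ of summands drawn from $L_k$. In each case the sum $a_1 + a_2 + a_3$ lies in an integer interval of length $j \cdot \delta N + (3 - j) \cdot \delta N = 3\delta N$, so the set of integers representable as a 3-sum from $C_k$ is contained in a union of four such intervals, giving at most $12\delta N$ distinct sum values.

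Finally I apply the pigeonhole-style counting from (\ref{trivial bound}): the $\binom{|C_k|+2}{3}$ three-multisets of $C_k$ distribute among these at most $12\delta N$ sum values, with each value hit at most $g$ times by the $B_3[g]$ property of $A$. Combining with the elementary bound $\binom{|C_k|+2}{3} \geq |C_k|^3/6$ gives $|C_k|^3 \leq 72 g \delta N$, and substituting $|C_k| = \alpha_k(\delta)|A|$ yields the claimed inequality.

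The one mildly delicate point is the integer count in the four sum-intervals: a priori each half-open interval of length $3\delta N$ could contribute $\lceil 3\delta N \rceil$ rather than exactly $3\delta N$ integers. The hypothesis $N > 2/\delta$, equivalently $\delta N > 2$, is precisely what ensures the resulting $O(1)$ rounding loss is absorbed into the leading $12\delta N$ term without weakening the stated bound; beyond that, the argument is a direct partition-and-count.
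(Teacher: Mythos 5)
Your argument is correct and is essentially the paper's own proof: the same split of $C_k$ into the two windows of length $\delta N$, the same four sum-intervals of length $3\delta N$ (indexed by how many summands come from each window), and the same count $\binom{|C_k|+2}{3} \leq g\,|C_k+C_k+C_k| \leq 12g\delta N$ followed by $\binom{|C_k|+2}{3} \geq |C_k|^3/6$. The only difference is your closing remark on integer rounding, a point the paper passes over silently; it is harmless either way.
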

\begin{proof}
Let $1 \leq k \leq l$ and consider $C_k$.  Since $C_k$ is a $B_3[g]$-set,  
\begin{equation}\label{asdf}
\binom{ |C_k| + 3 - 1 }{3 } \leq g |C_k + C_k + C_k|
\end{equation}
where $C_k + C_k + C_k = \{ a + b + c : a,b,c \in C_k \}$.  The set $|C_k + C_k + C_k |$ is contained in the union of the intervals
\begin{center}
$[ 3(k - 1 ) \delta N , 3 k \delta N]$, $[ (1 + ( k - 2) \delta ) N , (1 +  ( k + 1) \delta )N]$, 

\medskip

$[ (2 - ( k + 1) \delta )N , (2 - ( k - 2) \delta )N]$, and $[(3 - 3k \delta )N , (3 - 3(k - 1) \delta )N ]$.
\end{center}
Each of these four intervals has length $3 \delta N$ so 
$| C_k  + C_k + C_k| \leq 12 \delta N$.  Combining this inequality with (\ref{asdf}) we have 
$\binom{ |C_k | + 2 }{3} \leq 12 g \delta N$
which implies $\alpha_k ( \delta ) |A| = |C_k| \leq ( 3! 12 g \delta N )^{1/3}$.    
\end{proof}

\smallskip

Now we consider two cases.

\smallskip

\noindent
\textbf{Case 1:} For some $0 < \delta < \frac{1}{4}$ and $1 \leq k \leq l = \lfloor \frac{1}{2 \delta } \rfloor$, we have 
\[
\left( \frac{ 72 \delta }{14.295} \right)^{1/3} < \alpha_k ( \delta ).
\]

\medskip

In this case, we apply Lemma \ref{ck lemma} to get $|A| \leq ( 14.295 g N )^{1/3}$ and we are done.

\bigskip

\noindent
\textbf{Case 2:} For all $0 < \delta < \frac{1}{4}$ and $1 \leq k \leq l = \lfloor \frac{1}{2 \delta } \rfloor$, we have 
\begin{equation}\label{case 2}
  \alpha_k ( \delta ) \leq \left( \frac{ 72 \delta }{14.295} \right)^{1/3}.
\end{equation}

\medskip

Let $H(x) = 1.6 \cos x - 0.3 \cos 3x + 0.1 \cos 6x$.  Partition the interval $[ - \pi / 3 , \pi / 3]$ into 128 subintervals 
$I_1 , \dots , I_{128}$ of equal width so 
\[
I_j = \left[ - \frac{ \pi }{ 3 } +   \frac{ 2 \pi (j-1)}{3 \cdot 128 } , - \frac{  \pi }{ 3 } +  \frac{2 \pi j }{3 \cdot 128} \right]
\]
for $1 \leq j \leq 128$.  Let $v_j = \min_{x \in I_j} H(x)$ for $1 \leq j \leq 128$.  Since $H$ is an even function, 
$v_j = v_{128 - j + 1}$ for $1 \leq j \leq 64$.  The values $v_j$ can be approximated numerically. They satisfy 
\begin{equation}\label{vs}
v_1 < v_2 < v_3 < v_4  < v_5 < v_{35} \leq v_j
\end{equation}
for all $ 6 \leq j \leq 64$.  The sum 
\begin{equation}\label{sum H}
\sum_{a \in A} H \left( \left( a - \frac{N+1}{2} \right) t_3 \right)
\end{equation}
is minimized when $J = \left\{    \left( a - \frac{N+1}{2} \right) t_3 : a \in A \right\}$
contains as many elements as possible in $I_1 \cup I_2 \cup \dots \cup I_5$ and the remaining 
elements of $J$ are contained in $I_{35}$.  This follows from (\ref{vs}).  Furthermore, in order to minimize 
(\ref{sum H}), $J$ must intersect $I_1$ in as many elements as possible, and the remaining elements in $J$ intersect 
$I_2$ in as many elements as possible, and so on.  
By (\ref{case 2}) with $\delta = 1/128$, 
\[
\alpha_k (  1/128 ) \leq \left( \frac{ 72 (1/128) }{14.295} \right)^{1/3} 
\]
thus, 
\[
|J \cap I_1 | \leq \left( \frac{ 72 (1/128) }{14.295} \right)^{1/3} |A|.
\]
Similarly, by (\ref{case 2}) with $\delta  = j/128$ for $j \in \{2,3,4,5 \}$, 
\[
  \alpha_k (  j/128 ) \leq \left( \frac{ 72 (j/128) }{14.295} \right)^{1/3}.
\]
We conclude that 
\[
|J \cap ( I_1 \cup I_2 \cup \dots \cup I_j ) | \leq \left( \frac{ 72 (j/128) }{14.295} \right)^{1/3} |A|
\]  
for $1 \leq j \leq 5$.  From this inequality and (\ref{vs}), we deduce that 
\begin{eqnarray*}
\sum_{a \in A} H \left( \left( a - \frac{N+1}{2} \right) t_3 \right) & \geq & \sum_{j = 1}^{5} v_j 
\left( 
\left( \frac{ 72 (j/128) }{14.295} \right)^{1/3}  - 
\left( \frac{ 72 ((j-1)/128) }{14.295} \right)^{1/3} 
\right) |A| \\ 
& + & v_{35} \left( 1 - \left( \frac{ 72 (5/128) }{14.295} \right)^{1/3} \right) |A| > 1.2455 |A|.  
\end{eqnarray*}

Using 1.2455 in the derivation of (\ref{new bound 1}) instead of 
$\frac{1}{ \cos ( \pi / 3) } \left( \frac{4}{3 - \cos ( \pi  / 3 ) } - 1 \right)$ gives 
\[
1.2455 |A| \leq \frac{1}{ \cos ( \pi / 3 ) } (1 + o_N (1)) |A| \left( \frac{ \sin ( \pi Q_3 ) }{ \pi Q_3 } \right)^{1/3} .
\]
This inequality can be rewritten as 
\[
\left( \frac{1.2455}{2} \right)^3 \leq (1 + o_N (1)) \left( \frac{ \sin ( \pi Q_3 ) }{ \pi Q_3 } \right)
\]
which leads to the bound $|A| < (14.296 g N)^{1/3}$ for large enough $N$.  


\section{An optimization problem}

In this section we introduce an optimization problem that is motivated by (\ref{sdf}) from the previous section.  

Given integers $K$ and $h \geq 2$, define 
\[
\mathcal{F}_{K,  h} = \left\{ \sum_{j=1}^{ K } b_j \cos ( jx ) : \sum_{j = 1}^{ K } | b_j | = \frac{1}{ \cos  ( \pi / h )} \right\}.
\]
For $A \subseteq [N]$ and $F \in \mathcal{F}_{K , h }$, define 
\[
w_F (A) = \sum_{a \in A} F \left( \left(  a - \frac{N+1}{2} \right) \frac{2 \pi }{ h  N} \right)
\]
and
\[
\psi ( N , K , h) = \min_{  A \subseteq [N] , A \neq \emptyset} \sup \left\{ \frac{ w_F(A) }{|A|} : F \in \mathcal{F}_{ K , h } \right\}.
\]
Our interest in $\psi ( N , K ,h )$ is due to the following proposition.  

\begin{proposition}\label{tech lemma}
If $A \subseteq [N]$ is a $B_h[g]$-set and $K \leq hN$, then 
\[
|A| \leq  (1 + o_N (1)) \left( \frac{ y_h h! h g N}{ \pi } \right)^{1/h}
\]
where $y_h$ is the unique real number in $[0, \pi ]$ with 
$\frac{ \sin y_h }{y_h } = \left( \cos( \pi / h ) \psi ( N , K , h ) \right)^h$.   
\end{proposition}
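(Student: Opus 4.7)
The plan is to follow the derivation of inequality~(\ref{new bound 1}) verbatim, but replace the specific trigonometric polynomial $G$ from~(\ref{def of G}) with an arbitrary element of $\mathcal{F}_{K,h}$, so that the choice of test function is absorbed into the quantity $\psi(N,K,h)$ rather than being fixed ahead of time. Fix a $B_h[g]$-set $A \subseteq [N]$, and let $F(x) = \sum_{j=1}^{K} b_j \cos(jx)$ be an arbitrary member of $\mathcal{F}_{K,h}$. Expanding $w_F(A)$ as the real part of $\sum_{j=1}^{K} b_j \sum_{a \in A} e^{i j (a-(N+1)/2) t_h}$ and applying the triangle inequality, exactly as in the four-line display after~(\ref{sdf}), yields $w_F(A) \leq \sum_{j=1}^{K} |b_j| \, |f(t_h j)|$.

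The hypothesis $K \leq hN$ ensures that Lemma~\ref{green lemma 1} applies to each frequency, so $|f(t_h j)| \leq (1+o_N(1))|A| \bigl( \sin(\pi Q_h)/(\pi Q_h) \bigr)^{1/h}$ for all $1 \leq j \leq K$. Combined with the normalisation $\sum_{j=1}^{K} |b_j| = 1/\cos(\pi/h)$ that is built into $\mathcal{F}_{K,h}$, this gives
\[
\frac{w_F(A)}{|A|} \leq \frac{1+o_N(1)}{\cos(\pi/h)} \left( \frac{\sin(\pi Q_h)}{\pi Q_h} \right)^{1/h}.
\]
The right-hand side does not depend on $F$, so the inequality persists after taking the supremum over $F \in \mathcal{F}_{K,h}$ on the left. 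The definition of $\psi(N,K,h)$ as a minimum over nonempty $A \subseteq [N]$ of such suprema gives $\psi(N,K,h) \leq \sup_F w_F(A)/|A|$ for our particular $A$, and raising to the $h$th power produces
\[
\bigl( \cos(\pi/h) \, \psi(N,K,h) \bigr)^h \leq (1 + o_N(1)) \, \frac{\sin(\pi Q_h)}{\pi Q_h}.
\]

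To finish, I would argue exactly as in the passage from~(\ref{new bound 1}) to~(\ref{new bound 2}): the function $\sin x / x$ is strictly decreasing on $[0,\pi]$, and $\pi Q_h$ lies in $[0,\pi]$ by the trivial bound~(\ref{trivial bound}), so the defining identity $\sin(y_h)/y_h = (\cos(\pi/h)\,\psi(N,K,h))^h$ together with the previous display forces $\pi Q_h \leq (1+o_N(1)) y_h$. Rewriting $\pi Q_h = \pi |A|^h/(h! h g N)$ yields the stated bound $|A| \leq (1+o_N(1)) (y_h h! h g N / \pi)^{1/h}$. No genuine obstacle is expected here: the argument is a formal restatement of the steps already carried out in Section~2, with the optimisation over trigonometric polynomials left general, and Proposition~\ref{tech lemma} in fact recovers inequality~(\ref{new bound 2}) when $F$ is specialised to the $G$ of~(\ref{def of G}).
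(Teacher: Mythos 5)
Your proof is correct and is essentially the argument the paper intends: Proposition \ref{tech lemma} is stated without a separate proof precisely because it is the Section 2 derivation of (\ref{new bound 1})--(\ref{new bound 2}) with the specific function $G$ of (\ref{def of G}) replaced by an arbitrary $F \in \mathcal{F}_{K,h}$, the normalisation $\sum_j |b_j| = 1/\cos(\pi/h)$ playing the role of $\sum_j |c_j|$, and the lower bound $\psi(N,K,h) \le \sup_F w_F(A)/|A|$ substituting for the explicit minimum of $G$. Your handling of the endgame (monotonicity of $\sin x / x$ on $[0,\pi]$ and $\pi Q_h \le \pi$) matches the paper's, so nothing further is needed.
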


The function $G$ defined by (\ref{def of G})  
shows that 
\begin{equation*}
\psi (N , h , h) \geq \frac{1}{ \cos ( \pi / h ) } \left( \frac{4}{3 - \cos ( \pi / h ) } - 1 \right).
\end{equation*}

When $h =3$, this gives $\psi (N , 3 , 3) \geq 1.2$ which implies $\psi (N , 6 , 3 ) \geq 1.2$.  This is because 
the collection of functions $\mathcal{F}_{3,3}$ is a subset of $ \mathcal{F}_{6,3}$.  
By considering more than one function, we can improve the bound $\psi (N , 6 , 3) \geq 1.2$.  The method by which
we achieve this can be stated just as easily for general $K$ and $h$ so we do so.         

To estimate $\psi ( N , K , h )$, we will consider finite subsets of $\mathcal{F}_{K , h }$.  Given a subset 
$\mathcal{F}_{K,h} ' \subseteq \mathcal{F}_{k , h }$, we obviously have 
\begin{equation}\label{f f'}
\sup \left\{ \frac{ w_F (A) }{ |A| } : F \in \mathcal{F}_{K,h}' \right\} 
\leq   
\sup \left\{ \frac{ w_F (A) }{ |A| } : F \in \mathcal{F}_{K,h} \right\}
\end{equation}
for every $A \subseteq [N]$ with $A \neq \emptyset$.  When $\mathcal{F}_{K,h}'$ is finite, then the 
supremum on the left hand side of (\ref{f f'}) can be replaced with the minimum.  
Let $m$ be a positive integer and partition the interval $[ - \pi / h , \pi / h ]$ into $m$ subintervals $I_1^m , \dots , I_m^m$ where 
\[
I_j^m = \left[ - \frac{ \pi }{h} + \frac{ 2 \pi ( j - 1) }{hm} , - \frac{ \pi }{h} + \frac{ 2 \pi j  }{h m } \right]
\]
for $1 \leq j \leq m$.  Any $F \in \mathcal{F}_{K,h}$ is continuous and thus obtains its minimum value on $I_j^m$.  
Given $F \in \mathcal{F}_{K,h}$, define 
\[
v_{m,j} (F) = \min_{x \in I_j^m } F(x).
\]
Given $A \subseteq [N]$, define
\[
\alpha_{m,j} (A) = \frac{1}{ |A| } \left| \left\{ ( a - \frac{N+1}{2} ) \frac{2 \pi }{ hN } : a \in    A \right\} \cap I_j^m \right|.
\]
With this notation, we have that for any $A \subseteq [N]$ and $F \in \mathcal{F}_{K,h}$, 
\[
w_F(A) \geq \sum_{j = 1}^{m} \alpha_{m , j}(A) |A| v_{m,j} (F ).
\]
Therefore, given a finite set $\{ F_1 , \dots , F_n \} \subseteq \mathcal{F}_{K,h}$, 
\[
\psi ( N , K , h ) \geq \min_{ A \subseteq [N] , A \neq \emptyset} \max 
\left\{ \sum_{j = 1}^{m} \alpha_{m,j} (A) v_{m,j} (F_k) : 1 \leq k \leq n \right\}.
\]

We now put the above discussion to use by proving the following result.  

\begin{theorem}\label{psi 3 bound}
For sufficiently large $N$, the function $\psi (N , 6 , 3)$ satisfies the estimate 
\[
\psi (N , 6 , 3) \geq 1.2228.
\]
\end{theorem}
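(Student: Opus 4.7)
The plan is to apply the minimax reduction developed earlier in this section. Fix a finite collection of test functions $\{F_1, \ldots, F_n\} \subseteq \mathcal{F}_{6,3}$ and a partition parameter $m$. For $N$ sufficiently large and $m$ fixed, every point $(a - (N+1)/2) t_3$ lies strictly inside $(-\pi/3, \pi/3)$, so $\sum_j \alpha_{m,j}(A) = 1$ and the $\alpha_{m,j}(A)$ form a probability distribution over the $m$ subintervals. The inequality displayed just before the statement of Theorem \ref{psi 3 bound} then gives
\begin{equation*}
\psi(N, 6, 3) \;\geq\; \min_\alpha \, \max_{1 \leq k \leq n} \, \sum_{j=1}^m \alpha_j v_{m,j}(F_k),
\end{equation*}
where $\alpha$ ranges over the standard simplex in $\mathbb{R}^m$. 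By linear programming duality this minimax equals $\max_\lambda \min_j \sum_k \lambda_k v_{m,j}(F_k)$, so it suffices to exhibit nonnegative weights $\lambda_1, \ldots, \lambda_n$ summing to $1$ such that $\sum_k \lambda_k v_{m,j}(F_k) \geq 1.2228$ for every $1 \leq j \leq m$.

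A natural starting point is $G(x) = 1.6 \cos x - 0.4 \cos 3x$, which is the function from (\ref{def of G}) specialized to $h = 3$; by itself it yields $\psi(N, 6, 3) \geq 1.2$, since $\min_{[-\pi/3, \pi/3]} G = 1.2$ is attained only at $x = \pm \pi/3$. To push past $1.2$, one must add test functions that are strictly larger than $1.2$ near $\pm \pi/3$ while still lying in $\mathcal{F}_{6,3}$. Natural candidates are $H(x) = 1.6 \cos x - 0.3 \cos 3x + 0.1 \cos 6x$ (employed already in the proof of Theorem \ref{b3 bound}) together with perturbations of the form $b_1 \cos x + b_3 \cos 3x + b_6 \cos 6x$ with $|b_1| + |b_3| + |b_6| = 2$, the coefficients tuned so that the minima of the various test functions are distributed throughout $[-\pi/3, \pi/3]$ rather than all concentrated at $\pm \pi/3$. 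Additional test functions involving $\cos 2x$, $\cos 4x$, or $\cos 5x$ may also be useful.

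Once a family and a partition size $m$ (on the order of a few hundred, as suggested by the choice $m = 128$ in the proof of Theorem \ref{b3 bound}) are fixed, the numbers $v_{m,j}(F_k)$ are computed rigorously: each $F_k$ is a trigonometric polynomial of degree at most $6$, so $F_k'$ has boundedly many zeros on each $I_j^m$, and the minimum of $F_k$ on $I_j^m$ can be enclosed from below by evaluating at these zeros and at the endpoints. Solving the finite-dimensional LP $\max_\lambda \min_j \sum_k \lambda_k v_{m,j}(F_k)$ produces explicit weights $\lambda_k$, which can then be rationalized, and the inequalities $\sum_k \lambda_k v_{m,j}(F_k) \geq 1.2228$ are verified subinterval by subinterval.

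The main obstacle is the design step: choosing a family $\{F_k\}$ and a partition for which the LP value actually reaches $1.2228$. A naive choice produces only the trivial bound $1.2$ coming from $G$ alone, and closing the remaining gap of $0.0228$ requires the test functions to have well-spread, complementary minima on $[-\pi/3, \pi/3]$. Once an adequate family is in hand, however, the verification on each subinterval reduces to a routine (if numerically intensive) calculation.
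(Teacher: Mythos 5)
Your setup is sound and matches the reduction the paper itself establishes just before the theorem: pass to a finite family $\{F_k\}\subseteq\mathcal{F}_{6,3}$, partition $[-\pi/3,\pi/3]$ into $m$ subintervals, and lower-bound $\psi(N,6,3)$ by a finite minimax over the simplex of mass distributions $\alpha$. Your further step of dualizing the LP to seek a single convex combination $\lambda$ with $\sum_k \lambda_k v_{m,j}(F_k)\geq 1.2228$ for all $j$ is legitimate and equivalent in value to the primal computation. The paper instead works on the primal side: it takes $m=12$ and five explicit functions ($F_1=1.7\cos x-0.3\cos 3x$, $F_2=1.6\cos x-0.3\cos 3x+0.1\cos 6x$, $F_3=1.5\cos x-0.4\cos 3x+0.1\cos 6x$, $F_4=1.2\cos x-0.6\cos 3x+0.2\cos 6x$, $F_5=-2\cos 3x$), splits into five cases according to the value of $\alpha_1+\alpha_{12}$ (the mass near $\pm\pi/3$), and in each case selects the one $F_i$ whose tabulated $v_{12,j}(F_i)$ values force $w_{F_i}(A)>1.2228|A|$.

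The genuine gap is that you never carry out the part of the argument that constitutes the actual proof: no explicit family $\{F_k\}$, no choice of $m$, no computed $v_{m,j}(F_k)$, and no weights $\lambda_k$ are exhibited, and you acknowledge as much by calling the design step ``the main obstacle.'' Without a concrete certificate the statement $\psi(N,6,3)\geq 1.2228$ is not established; the entire content of the theorem is the existence of such a certificate. There is also a factual error in your design heuristic: for $G(x)=1.6\cos x-0.4\cos 3x$ the minimum value $1.2$ on $[-\pi/3,\pi/3]$ is attained at $x=0$ as well as at $x=\pm\pi/3$ (indeed $G(0)=1.6-0.4=1.2$ and $G''(0)>0$), so test functions that merely exceed $1.2$ near the endpoints cannot close the gap; the family must also control distributions concentrated near the origin, which is exactly why the paper's $F_1$ (with $F_1(0)=1.4$ and $v_{12,1}(F_1)\geq 1.15<1.2$) is paired against the endpoint-heavy functions $F_4,F_5$ in the case analysis.
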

\begin{proof}
Let 

\medskip

$F_1 (x) = 1.7 \cos x - 0.3 \cos 3x$, $F_2 (x) = 1.6 \cos x - 0.3 \cos 3x + 0.1 \cos 6x$,

\medskip

$F_3 (x) = 1.5 \cos x - 0.4 \cos 3x + 0.1 \cos 6x$, $F_4 (x) = 1.2 \cos x - 0.6 \cos 3x + 0.2 \cos 6x$, 

\medskip

$F_5 (x) = -2 \cos 3x$,

\medskip

\noindent
and $\mathcal{F} = \{ F_1 , F_2 , F_3, F_4 , F_5 \}$.  Observe that $\mathcal{F} \subseteq \mathcal{F}_{6,3 }$.  
We take $m = 12$ and we must compute the numbers 
$v_{12,j} (F_k)$ for $1 \leq j \leq 12$ and $1 \leq k \leq 5$.  Since each $F_k$ is an even function, 
$v_{12,j} (F_k) = v_{12 , 12 - j + 1} (F_k)$ for $1 \leq j \leq 6$.  
To prove Theorem \ref{psi 3 bound}, we will only need to estimate these values from below.  

Let $A \subseteq [N]$ with $A \neq \emptyset$.  We assume that no element of the form $( a - \frac{N+1}{2} ) \frac{2 \pi }{3N }$ 
is contained in two of the intervals $I_1^{12} , \dots , I_{12}^{12}$.  For large $A$, this will not affect $|A|$, at least in an asymptotic sense. 
Under this assumption, the non-negative real numbers 
$\alpha_{12,1} (A) , \dots , \alpha_{12,12} (A)$ satisfy 
\[
\alpha_{12 , 1}(A) + \dots + \alpha_{12, 12} (A) = 1.
\]
We will consider several cases which depend on the distribution of $A$.
For notational convenience, we write $\alpha_j$ for $\alpha_{12 , j } (A)$.     

\bigskip

\noindent
\textbf{Case 1:} $\alpha_1 + \alpha_{12} \leq 0.6$.

\smallskip

Here we will use the function $F_1(x)$.    
Lower estimates on the $v_{ 12 , j} (F_1)$ are 
\begin{center}
$v_{12 , 1} (F_1) \geq 1.15$, ~$v_{12,2} (F_1) \geq 1.3525$, ~$v_{12,3} (F_1) \geq 1.4522$, 

\medskip

$v_{12,4} (F_1) \geq 1.4474$, ~$v_{12, 5} (F_1) \geq 1.4143$,~ and ~$v_{12,6} (F_1) \geq 1.4$.
\end{center}
In fact, these values satisfy
\[
v_{12,1}(F_1) \leq v_{12,2} (F_1) \leq v_{12,6} (F_1) \leq v_{12,5} (F_1) \leq v_{12,4} (F_1) \leq v_{12,3} (F_1).
\]
Since $\alpha_1 + \alpha_{12} \leq 0.6$, we must have 
\[
w_{F_1} (A) \geq ( 0.6 v_{12,1} (F_1) + 0.4 v_{12,2} (F_1) )|A| \geq ( 0.6(1.15) + 0.4 ( 1.3525) )|A| > 1.23 |A|.
\]

\noindent
\textbf{Case 2:} $0.6 \leq \alpha_1 + \alpha_{12} \leq 0.7$.

\smallskip

Here we use the function $F_2 (x)$.   A close look at Case 1 shows that if $v_{12,1} (F_2)$ is one of the two 
smallest values in the set $\{ v_{12,j} (F_2) : 1 \leq j \leq 6 \}$, then essentially the same 
estimate applies.  The two smallest values 
are $v_{12,1}(F_2) \geq 1.2$ and $v_{12,4} (F_2) \geq 1.2834$.  
Since $0.6 \leq \alpha_1 + \alpha_{12} \leq 0.7$, 
\[
w_{F_2} (A) \geq ( 0.7 ( 1.2) + 0.3( 1.2834) )|A| > 1.225 |A|.
\]

\noindent
\textbf{Case 3:} $0.7 \leq \alpha_1 + \alpha_{12} \leq 0.8$.

\smallskip

Here we use the function $F_3 (x)$.   In this range of $\alpha_1 + \alpha_{12}$, 
our estimate behaves a bit differently.  Lower estimates on the $v_{12,j} (F_3)$ are  
\begin{center}
$v_{12 , 1} (F_3) \geq 1.25$, ~ $v_{12,2} (F_3) \geq 1.299$, ~ $v_{12,3} (F_3) \geq 1.199$, 

\medskip

$v_{12,4} (F_3) \geq 1.1595$, ~$v_{12, 5} (F_3) \geq 1.1595$, ~and ~$v_{12,6} (F_3) \geq 1.18$.
\end{center}
In this case, $w_{F_3} (A)$ will be minimized when $\alpha_1 + \alpha_{12}$ is as small as possible.  In the previous 
two cases, $w_{F_i} (A)$ was minimized when $\alpha_1 + \alpha_{12}$ was as large as possible.  
We conclude that
\[
w_{F_3} (A) \geq ( 0.7 ( 1.25) + 0.3( 1.1595) )|A| > 1.2228 |A| .
\]

\noindent
\textbf{Case 4:} $0.8 \leq \alpha_1 + \alpha_{12} \leq 0.9$.

\smallskip

In this case we use the function $F_4(x)$.  Lower estimates on the $v_{12,j} (F_4)$ are  
\begin{center}
$v_{12 , 1} (F_4) \geq 1.3909$, ~ $v_{12,2} (F_4) \geq 1.1192$, ~ $v_{12,3} (F_4) \geq 0.8392$, 

\medskip

$v_{12,4} (F_4) \geq 0.7276$, ~$v_{12, 5} (F_4) \geq 0.7264$, ~and ~$v_{12,6} (F_4) \geq 0.7621$.
\end{center}
We have 
\[
w_{F_4} (A) \geq ( 0.8 ( 1.3909) + 0.2( 0.7264) )|A| > 1.25 |A| .
\]

\noindent
\textbf{Case 5:} $0.9 \leq \alpha_1 + \alpha_{12} \leq 1$.

\smallskip

Lower estimates on the $v_{12,j} (F_5)$ are
\begin{center}
$v_{12 , 1} (F_5) \geq 1.73$, ~ $v_{12,2} (F_5) \geq 1$, ~ $v_{12,3} (F_5) \geq -.01$, 

\medskip

$v_{12,4} (F_5) \geq -1$,~ $v_{12, 5} (F_5) \geq -1.8$,~ and ~ $v_{12,6} (F_5) \geq -2$.
\end{center}
As in Cases 3 and 4, $w_{F_5} (A)$ is minimized when $\alpha_1 + \alpha_{12}$ is as small as possible.  
Hence,  
\[
w_{F_5} (A) \geq ( 0.9 ( 1.73) + 0.1( -2) )|A| > 1.35 |A|.
\]

\medskip

In all five cases, we can find a function $F_i \in \mathcal{F}$ such that $w_{F_i} (A) > 1.2228 |A|$.  This completes the proof of 
Theorem \ref{psi 3 bound}.    
\end{proof}


\section{Concluding Remarks}

Although it is an improvement of $\psi (N , 6 , 3) \geq 1.2$, 
Theorem \ref{psi 3 bound} is not enough to prove part (ii) of Theorem \ref{b3 bound}.  
The improvement on $B_3[g]$-sets uses the $B_3[g]$ property to increase the 1.2 to 1.2455
which exceeds the 1.2228 provided by Theorem \ref{psi 3 bound}.  Similar arguments can be done for $B_h[g]$-sets with $h >3$, but the improvements in the results of Table 1 are minimal.  
Aside from $B_3[g]$-sets, the bounds in Table 1 come from lower bounds on $\psi(N ,h , h)$ together with 
Lemma \ref{green lemma 1}.    

The function $\psi ( N , K , h )$ is relevant to an inequality of Cilleruelo.   
Let $A$ be a finite set of positive integers.  For an integer $h \geq 2$, let 
\begin{center}
$r_h (n) = | \{ ( a_1 , \dots , a_h) \in A^h : a_1 + \dots + a_h = n \}|$ and $R_h ( m) = \displaystyle\sum_{ n=1}^{m} r_h (m)$.
\end{center}
Generalizing the argument of \cite{crt}, Cilleruelo proved the following result.

\begin{theorem}[Cilleruelo \cite{cill}]\label{cill thm}
Let $A \subseteq [N]$, $h \geq 2$ be an integer, and $\mu$ be any real number.  For any positive integer $H = o(N)$, 
\[
\sum_{n = h }^{hN + H} | R_h (n) - R_h (n - H) - \mu | \geq ( L_h + o(1) ) H |A|^h
\]
where $L_2 = \frac{4}{ ( \pi + 2)^2 }$ and $L_h = \cos^h ( \pi / h )$ for $h > 2$.
\end{theorem}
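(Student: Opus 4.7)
The plan is to prove Theorem \ref{cill thm} by an $L^1$--$L^\infty$ duality argument coupled with a Fourier-analytic lower bound on $|f(t)|$, in the spirit of \cite{crt} and Lemma \ref{green lemma 1}. Set $M = hN + H - h + 1$ and $t = 2\pi/M$, so the summation range $n \in [h, hN+H]$ is exactly one period of length $M$. Writing $S(n) := R_h(n) - R_h(n-H) = \sum_{j=0}^{H-1} r_h(n-j)$ and $f(\theta) = \sum_{a \in A} e^{ia\theta}$, the identity
\[
\sum_{n=h}^{hN+H} S(n) e^{iknt} = f(kt)^h \sum_{j=0}^{H-1} e^{ijkt}
\]
is immediate, and at $k=1$ the inner Dirichlet sum equals $(1+o(1))H$ because $Ht \to 0$ as $N \to \infty$ (since $H = o(N)$).

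By $L^1$--$L^\infty$ duality, for any function $F$ on $\mathbb{Z}_M$ with $\|F\|_\infty \leq 1$ we have
\[
\sum_{n=h}^{hN+H} |S(n) - \mu| \geq \left| \sum_{n=h}^{hN+H} (S(n) - \mu) F(n) \right|.
\]
If $F$ is chosen to have mean zero over one period, the term $\mu \sum_n F(n)$ vanishes and the resulting lower bound is uniform in $\mu \in \mathbb{R}$. This is the crucial feature that lets us absorb the arbitrary shift $\mu$, so the first task is to restrict attention to mean-zero test functions $F$.

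For $h \geq 3$, I would take $F(n) = \cos((n-c)t)$, with $c$ chosen so that $e^{-ict} f(t)^h$ is a nonnegative real. This $F$ is mean-zero, has $\|F\|_\infty = 1$, and the Fourier identity gives $\sum_n S(n) F(n) = (1+o(1)) H |f(t)|^h$. The centered phases $(a - (N+1)/2)t$ lie in $[-\pi/h + o(1), \pi/h + o(1)]$, so taking the real part of $f(t)$ after a suitable rotation yields the arc estimate $|f(t)| \geq (1+o(1)) \cos(\pi/h)|A|$. Combining the two estimates produces the claimed constant $L_h = \cos^h(\pi/h)$, exactly as in the derivation leading to (\ref{new bound 1}).

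The main obstacle is the $h = 2$ case, where $\cos(\pi/2) = 0$ renders the single-cosine test function useless. Here one must construct a more delicate mean-zero trigonometric polynomial $F$, built from several frequencies of the form $\cos((n-c_k)kt)$, that still extracts a positive multiple of $H|A|^2$ from the pairing $\sum_n S(n) F(n)$ despite the centered phases now filling the full arc $[-\pi/2, \pi/2]$. Optimizing $|\sum_n S(n) F(n)|/(H|A|^2 \|F\|_\infty)$ over all such $F$ is an extremal trigonometric-polynomial problem closely related to the quantity $\psi(N,K,h)$ of Section 3; its supremum is realized at $4/(\pi+2)^2$. Pinning down this extremal value is the heart of Cilleruelo's argument for $h=2$ and is considerably more delicate than the phase-alignment step that suffices for $h \geq 3$.
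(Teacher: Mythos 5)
The paper does not prove this statement: it is quoted verbatim from Cilleruelo's paper \cite{cill} with attribution, so there is no in-paper argument to compare yours against. Measured against the template the paper does use (the derivation of (\ref{new bound 1}) from Lemma \ref{green lemma 1} via a mean-zero cosine test function), your argument for $h \geq 3$ is sound and essentially complete: the identity $\sum_n S(n)e^{iknt} = f(kt)^h\sum_{j=0}^{H-1}e^{ijkt}$ is exact (no wrap-around issues, since the support of $S$ sits inside the summation range), the Dirichlet factor is $(1+o(1))H$ because $Ht \to 0$, the single cosine $\cos((n-c)t)$ really does sum to zero over the $M$ consecutive integers of the range, and the arc estimate $|f(t)| \geq (1+o(1))\cos(\pi/h)|A|$ is exactly the phase-containment fact the paper uses below (\ref{sdf}). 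Chaining these gives $L_h = \cos^h(\pi/h)$ for $h \geq 3$.

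The genuine gap is the case $h=2$, which is part of the theorem as stated and is where the constant $L_2 = 4/(\pi+2)^2$ lives. You correctly diagnose why the $h\geq 3$ argument collapses there ($\cos(\pi/2)=0$, the centered phases fill $[-\pi/2,\pi/2]$) and you name the right constant, but you do not exhibit the test function or carry out the extremal computation that produces $2/(\pi+2)$; saying the supremum ``is realized at $4/(\pi+2)^2$'' asserts the answer rather than deriving it. As it stands your write-up proves the theorem only for $h\geq 3$ and defers the hardest case to the very reference being reconstructed. To close it you would need to produce an explicit mean-zero $F$ with $\|F\|_\infty\leq 1$ and a matching lower bound on the pairing $\sum_n S(n)F(n)$ that survives the full half-circle of phases --- this is the analogue of the paper's Section 3 optimization problem for $K$ large and $h=2$, and it is the substantive content of Cilleruelo's $h=2$ argument.
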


By slightly modifying the argument in \cite{cill} that is used to prove Theorem \ref{cill thm}, it is easy to prove the next proposition.

\begin{proposition}
Let $A \subseteq [N]$, $h \geq 2$ be an integer, and $\mu$ be a real number.  For any positive integers 
$H = o(N)$ and $K \leq \frac{N}{H}$, 
\[
\sum_{n = h }^{hN + H} | R_h (n) - R_h (n - H) - \mu | \geq ( \psi (N , K , h )^h  L_h + o(1) ) H |A|^h
\]
where $L_2 = \frac{4}{ ( \pi + 2)^2 }$ and $L_h = \cos^h ( \pi / h )$ for $h > 2$.
\end{proposition}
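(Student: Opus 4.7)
The plan is to follow the proof of Theorem~\ref{cill thm} in \cite{cill} and, at the step where a specific trigonometric test function is chosen, to replace that cosine by an arbitrary $F\in\mathcal{F}_{K,h}$ and optimize.

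Cilleruelo's argument proceeds by Fourier analysis. Writing $R_h(n)-R_h(n-H)=\sum_{m=n-H+1}^{n}r_h(m)$, the Fourier coefficients of this quantity are $e_H(t)f(t)^h$, where $e_H(t)=\sum_{l=0}^{H-1}e^{-ilt}$ is the Dirichlet factor of the length-$H$ window and $f(t)=\sum_{a\in A}e^{iat}$. For $H=o(N)$ and frequencies $t=jt_h$ with $j\leq K\leq N/H$, we have $|e_H(jt_h)|=H(1+o(1))$. The $L^1$-sum $\sum_n|R_h(n)-R_h(n-H)-\mu|$ is then bounded below by pairing with a real-valued test function $\phi$ of sup-norm at most $1$, which on the Fourier side extracts a linear combination of the coefficients $e_H(jt_h)f(jt_h)^h$. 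Cilleruelo's choice is essentially the rescaled cosine $\phi(x)=\cos(\pi/h)\cdot F_0(x)$ where $F_0(x)=\frac{1}{\cos(\pi/h)}\cos x$; the constraint $\sum|b_j|=1/\cos(\pi/h)$ is exactly what keeps $\|\phi\|_\infty\leq 1$, and the resulting bound involves $(w_{F_0}(A)/|A|)^h\geq 1$ together with the constant $\cos^h(\pi/h)=L_h$ for $h\geq 3$ (and $L_2=4/(\pi+2)^2$ by an analogous endpoint computation).

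My modification is to take $\phi(x)=\cos(\pi/h)\cdot F(x)$ for an arbitrary $F\in\mathcal{F}_{K,h}$. The definition of $\mathcal{F}_{K,h}$ preserves $\|\phi\|_\infty\leq 1$, so the duality step is unaffected. Since every frequency $jt_h$ with $1\leq j\leq K$ lies within the range on which $e_H(jt_h)=H(1+o(1))$ (this is precisely the hypothesis $K\leq N/H$), the Fourier-side computation goes through identically, with the only change being that $(w_{F_0}(A)/|A|)^h$ is replaced by $(w_F(A)/|A|)^h$. Taking the supremum over $F\in\mathcal{F}_{K,h}$ and the infimum over nonempty $A\subseteq[N]$ yields $\psi(N,K,h)^h$, giving the claimed bound $(\psi(N,K,h)^h L_h+o(1))H|A|^h$.

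The main obstacle will be bookkeeping: verifying that in \cite{cill} the test cosine enters in exactly one place, and that the quantity $w_F(A)/|A|$ is raised to the $h$-th power in the final estimate, so that the $\psi$ improvement compounds as $\psi^h$ in direct parallel with the improvement derived via the function $G$ in Section~2 (where the factor $1/h$ in Lemma~\ref{green lemma 1} played the same role of turning a factor $\gamma$ into $\gamma^h$ after rearrangement). Granted this accounting, the substitution of $F\in\mathcal{F}_{K,h}$ for the single cosine in \cite{cill} is a mechanical change.
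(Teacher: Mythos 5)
The paper itself gives no proof of this proposition (it is stated as following ``by slightly modifying the argument in \cite{cill}''), and your overall plan --- rerun Cilleruelo's argument with a general $F\in\mathcal{F}_{K,h}$ in place of the single cosine --- is exactly the intended modification. However, the step you defer as ``bookkeeping'' is the one place where something genuinely new is needed, and the mechanism as you describe it does not produce the claimed bound. Pairing $R_h(n)-R_h(n-H)-\mu$ in $n$-space against a test function $\phi$ with $\|\phi\|_\infty\le 1$ supported on the frequencies $jt_h$, $1\le j\le K$, extracts a \emph{linear combination} $\sum_j |c_j|\,|e_H(jt_h)|\,|f(jt_h)|^h$ of $h$-th powers of Fourier coefficients; this is not the $h$-th power of the linear combination $\sum_j|c_j|\,|f(jt_h)|\ \ge\ \cos(\pi/h)\,w_F(A)$ that you need, so the assertion that ``the only change is that $(w_{F_0}(A)/|A|)^h$ is replaced by $(w_F(A)/|A|)^h$'' does not follow by substitution. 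You must supply one of the following: (i) use $F$ only to manufacture a single good frequency, via $w_F(A)\le\sum_j|b_j|\,|f(jt_h)|\le\frac{1}{\cos(\pi/h)}\max_{1\le j\le K}|f(jt_h)|$, and then apply Cilleruelo's one-frequency $L^1$ inequality at the maximizing index $j_0$ (this mirrors how the function $G$ is used against Lemma \ref{green lemma 1} in Section 2); or (ii) keep the multi-frequency pairing, align the phases of $\phi$ with those of the $e_H(jt_h)f(jt_h)^h$, and invoke Jensen's inequality (convexity of $x\mapsto x^h$ with weights $|c_j|$ summing to $1$) to pass from $\sum_j|c_j|\,|f(jt_h)|^h$ to $\bigl(\sum_j|c_j|\,|f(jt_h)|\bigr)^h$. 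Either route then yields $(\cos(\pi/h)\,w_F(A))^h\ge(\cos(\pi/h)\,\psi(N,K,h))^h|A|^h$ after optimizing over $F$ and minimizing over $A$.

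A secondary point: your claim that $|e_H(jt_h)|=H(1+o(1))$ for all $j\le K\le N/H$ fails at the top of that range; writing $|e_H(t)|=|\sin(Ht/2)/\sin(t/2)|$, for $j$ of order $N/H$ one only gets $|e_H(jt_h)|\ge \frac{\sin(\pi/h)}{\pi/h}\,H(1+o(1))$, a genuine constant-factor loss that the stated $o(1)$ cannot absorb. This is harmless when $K$ is fixed as $N\to\infty$ (as in the paper's application with $K=6$), but if you intend the full range $K\le N/H$ you must either check that the dominant frequency $j_0$ can be taken with $j_0H=o(N)$ or accept the weaker constant at the boundary.
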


For instance, Theorem \ref{psi 3 bound} gives
\[
\sum_{n = 3}^{3N + H} | R_3 (n)  - R_3 (n - H ) - \mu | \geq 
(1.2228^3 L_3 + o(1) ) H |A|^3.
\]


\section{Acknowledgment}

The author would like to thank Mike Tait for helpful discussions.  


\end{document}